\numberwithin{equation}{section}
\theoremstyle{plain}
\newtheorem{theorem}{Theorem}[section]
\newtheorem{lemma}[theorem]{Lemma}
\newtheorem{corollary}[theorem]{Corollary}
\theoremstyle{definition}
\newtheorem{definition}[theorem]{Definition}
\theoremstyle{definition}
\newtheorem{example}[theorem]{Example}
\DeclareSymbolFont{bbold}{U}{bbold}{m}{n}
\DeclareSymbolFontAlphabet{\mathbbold}{bbold}
\newcommand{\N}{\mathbb{N}}
\newcommand{\C}{\mathbb{C}}
\newcommand{\h}{\mathfrak{H}}
\newcommand{\cl}{c\ell}
\newcommand{\D}{\mathfrak{D}}
\newcommand{\comp}{{^\mathrm{c}}}
\newcommand{\heyting}{\mathbin{\rightarrow}}
\let\olddownarrow\downarrow
\renewcommand{\downarrow}{\mathord{\olddownarrow}}
\let\olduparrow\uparrow
\renewcommand{\uparrow}{\mathord{\olduparrow}}
\begin{document}

\title[A special class of congruences on $\kappa$-frames]{A special class of congruences on $\kappa$-frames}

\author[G. Manuell]{Graham Manuell}
\email{graham@manuell.me}
\address{Department of Mathematics and Applied Mathematics, University of Cape Town, Private Bag Rondebosch, Cape Town 7701, South Africa}

\thanks{I acknowledge financial assistance from the National Research Foundation of South Africa.}

\subjclass[2010]{Primary: 06D99; Secondary: 06B10, 06D22, 06D05.}
\keywords{$\kappa$-frame, closure, clear congruence, congruence frame}

\begin{abstract}
 Madden has shown that in contrast to the situation with frames, the smallest dense quotient of a $\kappa$-frame need not be Boolean.
 We characterise these so-called \textit{d}-reduced $\kappa$-frames as those which may be embedded as a generating sub-$\kappa$-frame
 of a Boolean frame. We introduce the notion of the closure of a $\kappa$-frame congruence and call a congruence \emph{clear} if it is
 the largest congruence with a given closure. These ideas are used to prove $\kappa$-frame analogues of known results concerning Boolean frame quotients.
 In particular, we show that \textit{d}-reduced $\kappa$-frames are precisely the quotients of $\kappa$-frames by clear congruences and that every
 $\kappa$-frame congruence is the meet of clear congruences.
\end{abstract}

\maketitle

\section{Background}\label{section:background}

For background on frames and $\kappa$-frames see \cite{PicadoPultr} and \cite{Madden}.
A frame is a complete lattice satisfying the distributivity law
$x \wedge \bigvee_{\alpha \in I} x_\alpha = \bigvee_{\alpha \in I} (x \wedge x_\alpha)$ for arbitrary families $(x_\alpha)_{\alpha \in I}$.
We denote the smallest element of a frame by $0$ and the largest element by $1$. A frame homomorphism is a function which preserves finite
meets and arbitrary joins.

We use $\kappa$ to denote a fixed regular cardinal. A $\kappa$-set is then a set of cardinality strictly less than $\kappa$.
A $\kappa$-frame is a bounded distributive lattice which has joins of $\kappa$-sets and satisfies the frame distributivity law
when $|I| < \kappa$. Morphisms of $\kappa$-frames are functions which preserve finite meets and $\kappa$-joins (i.e.\ joins of $\kappa$-sets).
For $\kappa = \aleph_1$ we obtain $\sigma$-frames as a special case and for $\kappa = \aleph_0$ we recover bounded distributive lattices.
Results we prove about $\kappa$-frames will also hold for frames after making the obvious changes.

Recall that a poset $P$ is said to be \emph{$\kappa$-directed} if every $\kappa$-subset $S \subseteq P$ has an upper bound in $P$.
A \emph{$\kappa$-ideal} on a $\kappa$-frame is a $\kappa$-directed downset. The set of $\kappa$-ideals $\h_\kappa L$ on a $\kappa$-frame $L$ yields a frame
when ordered by inclusion. This is the free frame generated by $L$.

A congruence on a $\kappa$-frame $L$ is an equivalence relation on $L$ that is also a sub-$\kappa$-frame of $L \times L$.
The following lemma is well-known.

\begin{lemma}\label{lem:congruence_intervals}
 Let $L$ be a $\kappa$-frame and let $C$ be a congruence on $L$. Then $(a,b) \in C$ if and only if there are $x,y \in L$ with $x \le a,b \le y$ such
 that $(x,y) \in C$.
\end{lemma}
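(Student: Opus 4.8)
The plan is to use only that $C$ is an equivalence relation (reflexive, symmetric, transitive) together with the fact that it is a sublattice of $L\times L$, i.e.\ closed under componentwise binary meets and joins; the $\kappa$-joins play no role. The ``if'' direction is the easy half, so I would dispatch it first and then handle the ``only if'' direction by exhibiting the required interval explicitly as $[a\wedge b,\, a\vee b]$.

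For the ``if'' direction, suppose $x\le a,b\le y$ with $(x,y)\in C$. Since $C$ is reflexive, $(a,a)\in C$, and taking the componentwise meet with $(x,y)$ gives $(x\wedge a,\, y\wedge a)=(x,a)\in C$, using $x\le a\le y$. The same argument with $b$ in place of $a$ yields $(x,b)\in C$, and then symmetry and transitivity give $(a,b)\in C$.

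For the ``only if'' direction, suppose $(a,b)\in C$ and set $x:=a\wedge b$ and $y:=a\vee b$, so that $x\le a,b\le y$ automatically. It remains to check $(x,y)\in C$. From $(a,b)\in C$ and $(b,b)\in C$ I would form the componentwise meet $(a\wedge b,\, b)\in C$; from $(b,a)\in C$ (symmetry) and $(b,b)\in C$ I would form the componentwise join $(b,\, a\vee b)\in C$; transitivity then chains these to $(a\wedge b,\, a\vee b)=(x,y)\in C$.

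There is essentially no real obstacle: the lemma is elementary and well-known. The only point requiring a little care is the bookkeeping — in each step choosing the right diagonal element and the right operation (meet versus join) so that the componentwise result collapses to the desired endpoint, using the order relations among $a$, $b$, $x$, $y$.
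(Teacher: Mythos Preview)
Your proof is correct. The paper does not actually supply a proof of this lemma; it simply records it as well-known, so there is nothing to compare against beyond noting that your argument is the standard one.
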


The congruence generated by $(0,a)$ for some $a \in L$ is called \emph{principal closed} and denoted by $\nabla_a$.
The congruence generated by $(a,1)$ is called \emph{open} and denoted by $\Delta_a$. We can explicitly compute
$\nabla_a = \{(x,y) \mid x \vee a = y \vee a\}$ and $\Delta_a = \{(x,y) \mid x \wedge a = y \wedge a\}$.
The join of one of these congruences with an arbitrary congruence $C$ can similarly be found to give
$\nabla_a \vee C = \{(x,y) \mid (x \vee a, y \vee a) \in C\}$ and $\Delta_a \vee C = \{(x,y) \mid (x \wedge a, y \wedge a) \in C\}$.

The lattice $\C L$ of all congruences on a $\kappa$-frame $L$ is a frame and the assignment $a \mapsto \nabla_a$ is an injective $\kappa$-frame homomorphism.
The congruences $\nabla_a$ and $\Delta_a$ are complements of each other and the principal closed and open congruences together generate $\C L$.

\section{Closure}

For frames, principal closed congruences are closed under arbitrary joins, but in the case of $\kappa$-frames, they are only closed
under joins of cardinality less than $\kappa$. Nevertheless, it is useful to consider arbitrary joins of principal closed congruences.

\begin{definition}
 A congruence on a $\kappa$-frame is called \emph{closed} if it is a join of principal closed congruences.
 If $I$ is a $\kappa$-ideal of a $\kappa$-frame, we define the closed congruence $\nabla_I = \bigvee_{a \in I} \nabla_a$.
\end{definition}
We can use closure under $\kappa$-joins to reduce any join of principal closed congruences to a $\kappa$-directed join. Also note that if a
principal closed congruence $\nabla_a$ is included in such a join, we might as well include any smaller principal closed congruences.
But a $\kappa$-directed lower set is nothing but a $\kappa$-ideal. So every closed congruence is of the form $\nabla_I$ for some $\kappa$-ideal $I$ on $L$.

We can compute joins with general closed congruences in a similar manner to as with principal closed congruences.
\begin{lemma}\label{lem:joins_with_generalised_closed}
 If $L$ is a $\kappa$-frame, $C \in \C L$ and $I$ is a $\kappa$-ideal on $L$, then
 \[\nabla_I \vee C = \{(x,y) \mid (x \vee i, y \vee i) \in C \text{ for some $i \in I$}\}.\]
 In particular, $\nabla_I = \{(x,y) \mid x \vee i = y \vee i \text{ for some $i \in I$}\}$.
\end{lemma}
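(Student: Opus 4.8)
The plan is to reduce the statement to the already-recorded formula $\nabla_a \vee C = \{(x,y) \mid (x\vee a, y\vee a)\in C\}$ for a principal closed congruence, exploiting the $\kappa$-directedness of $I$.

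First I would note that, since $\nabla_I = \bigvee_{a\in I}\nabla_a$ and $\C L$ is a complete lattice, a routine comparison of upper bounds gives $\nabla_I \vee C = \bigvee_{a\in I}(\nabla_a \vee C)$ (here one uses that $I \neq \emptyset$, which holds as $0 \in I$). Moreover $a \le b$ implies $\nabla_a \vee C \le \nabla_b \vee C$, so the $\kappa$-directedness of $I$ makes $(\nabla_a \vee C)_{a\in I}$ a $\kappa$-directed family of congruences.

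The key step is then the observation that the join in $\C L$ of a $\kappa$-directed family $(C_t)_t$ of congruences is simply their union $\bigcup_t C_t$. For this one checks that $\bigcup_t C_t$ is again a congruence: reflexivity and symmetry are immediate, while for transitivity, for closure under finite meets, and for closure under $\kappa$-joins one collects the (fewer than $\kappa$ many) relevant pairs, uses $\kappa$-directedness to find a single $C_t$ containing all of them, and applies that $C_t$ is itself a congruence. Being a congruence that contains every $C_t$, the union is their join. Combining everything,
\[\nabla_I \vee C = \bigcup_{a\in I}(\nabla_a \vee C) = \bigcup_{a\in I}\{(x,y) \mid (x\vee a, y\vee a)\in C\} = \{(x,y) \mid (x\vee i, y\vee i)\in C \text{ for some } i\in I\}.\]
For the last assertion, take $C$ to be the identity congruence $\nabla_0 = \{(x,x) \mid x\in L\}$; since this is the least congruence we have $\nabla_I \vee \nabla_0 = \nabla_I$, and $(x\vee i, y\vee i)\in\nabla_0$ just says $x\vee i = y\vee i$.

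The only real obstacle I anticipate is the bookkeeping in the key step — in particular, checking closure of the union under $\kappa$-joins, where one must be careful that a $\kappa$-sized set of indices still admits a common upper bound in the directed family. One could instead argue directly, verifying that the right-hand side $D$ is a congruence, that $\nabla_I \subseteq D$ and $C \subseteq D$, and that $D \subseteq \nabla_I \vee C$ — the last because $(x, x\vee i)$ and $(y, y\vee i)$ lie in $\nabla_i$, which links $(x,y)$ to $(x\vee i, y\vee i)$ by transitivity — but this amounts to essentially the same work.
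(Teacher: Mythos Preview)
The paper states this lemma without proof, so there is nothing to compare against directly. Your argument is correct and is precisely the one the surrounding text invites: the paper introduces $\nabla_I$ as a $\kappa$-directed join of principal closed congruences, records the formula $\nabla_a \vee C = \{(x,y)\mid (x\vee a,\,y\vee a)\in C\}$, and then remarks that joins with general closed congruences are computed ``in a similar manner''. Your reduction $\nabla_I\vee C=\bigvee_{a\in I}(\nabla_a\vee C)$, together with the fact that a $\kappa$-directed join of congruences is a union, fills in exactly what was left implicit.

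One small remark on your bookkeeping in the key step: when checking closure of $\bigcup_t C_t$ under $\kappa$-joins you pick fewer than $\kappa$ many pairs and want a common index bounding all their indices. Strictly speaking this uses that the \emph{index set} $I$ is $\kappa$-directed (which it is, being a $\kappa$-ideal), not merely that the image family of congruences is $\kappa$-directed as a subposet of $\C L$; you state this correctly, but it is worth being explicit since a $\kappa$-directed poset image could in principle arise from a non-$\kappa$-directed index set. Your alternative direct verification at the end is also fine and, as you say, essentially equivalent.
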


\begin{lemma}\label{lem:nabla_tilde_injective}
 The map $\nabla_\bullet \colon \h_\kappa L \to \C L$ is an injective frame homomorphism.
\end{lemma}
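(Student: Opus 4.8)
The plan is to verify three things: that $\nabla_\bullet$ preserves finite meets (in particular the top and bottom), that it preserves arbitrary joins, and that it is injective. Injectivity is essentially immediate: if $\nabla_I = \nabla_J$ then by Lemma \ref{lem:joins_with_generalised_closed} we have, for each $a \in I$, that $(0,a) \in \nabla_I = \nabla_J$, so there is some $j \in J$ with $a = a \vee j$... let me restart that — $(0,a)\in\nabla_J$ means $0 \vee j = a \vee j$ for some $j \in J$, i.e.\ $a \le j$, whence $a \in J$ since $J$ is a downset. Thus $I \subseteq J$, and by symmetry $I = J$.

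For preservation of joins: given a family $(I_\alpha)_\alpha$ of $\kappa$-ideals, write $J = \bigvee_\alpha I_\alpha$ for the join in $\h_\kappa L$. Since each $I_\alpha \subseteq J$ we get $\nabla_{I_\alpha} \le \nabla_J$, so $\bigvee_\alpha \nabla_{I_\alpha} \le \nabla_J$. Conversely, $\bigvee_\alpha \nabla_{I_\alpha} = \bigvee_\alpha \bigvee_{a \in I_\alpha} \nabla_a \supseteq \nabla_a$ for every $a \in \bigcup_\alpha I_\alpha$; since every element of $J$ is a $\kappa$-join of elements of $\bigcup_\alpha I_\alpha$ and principal closed congruences are closed under $\kappa$-joins, we get $\nabla_b \le \bigvee_\alpha \nabla_{I_\alpha}$ for all $b \in J$, hence $\nabla_J \le \bigvee_\alpha \nabla_{I_\alpha}$. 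This also handles $0 \mapsto \nabla_0 = \Delta$-bottom and the empty join. Preservation of the top follows because the top of $\h_\kappa L$ is $L$ itself and $\nabla_L \ni (0,1)$ forces $\nabla_L$ to be all of $\C L$.

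The main obstacle is preservation of binary meets, $\nabla_{I \cap J} = \nabla_I \wedge \nabla_J$. The inclusion $\subseteq$ is clear from monotonicity. For $\supseteq$, take $(x,y) \in \nabla_I \wedge \nabla_J$; then $(x,y)\in\nabla_I$ and $(x,y)\in\nabla_J$, so by Lemma \ref{lem:joins_with_generalised_closed} there are $i \in I$, $j \in J$ with $x \vee i = y \vee i$ and $x \vee j = y \vee j$. I would like to produce a single element of $I \cap J$ witnessing $(x,y) \in \nabla_{I\cap J}$, and the natural candidate is obtained by cutting $i$ and $j$ down to the interval determined by $x$ and $y$. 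Concretely, I expect that both $i \wedge (x\vee y)$... — more to the point, I would argue that since $(x,y)$ lies in both closed congruences, by Lemma \ref{lem:congruence_intervals} applied appropriately one may assume $x \le y$, and then $x \vee i = y \vee i$ gives $y \le x \vee i$, so $y \wedge j' $ for a suitable $j'$; the cleanest route is to observe $y = y \wedge (x \vee i) = x \vee (y \wedge i)$ by distributivity (a $\kappa$-frame satisfies the finite distributive law), so replacing $i$ by $y \wedge i \le y$ we may assume $i \le y$, and similarly $j \le y$. But then $i \le y \le x \vee j$, so $i = i \wedge (x \vee j) = (i \wedge x) \vee (i \wedge j)$, and since $i \wedge x \le x$ this shows $x \vee i = x \vee (i \wedge j)$; by symmetry $y \vee i = \dots$ — carrying this through yields that $k := i \wedge j$ lies in $I \cap J$ (both being downsets) and satisfies $x \vee k = y \vee k$, giving $(x,y) \in \nabla_{I\cap J}$ as required. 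The delicate point to get right is that the witnesses $i$ and $j$ can simultaneously be pushed below $y$ (equivalently, inside the congruence interval) using only finite distributivity, and that the resulting $i \wedge j$ still witnesses the original relation; I would write this chain of equalities out carefully in the actual proof.
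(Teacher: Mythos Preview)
The paper states this lemma without proof. Your direct verification is correct; in particular the binary-meet argument---reducing to $x \le y$ via \cref{lem:congruence_intervals}, trimming the witnesses $i,j$ below $y$ using distributivity, and then taking $k = i \wedge j$---goes through cleanly (the aside ``by symmetry $y \vee i = \dots$'' is unnecessary: once $x \vee k = y$ and $k \le y$ you already have $x \vee k = y = y \vee k$). A shorter route to the ``frame homomorphism'' half, and presumably what the paper has in mind by omitting the proof, is to invoke the universal property: since $\h_\kappa L$ is the free frame on the $\kappa$-frame $L$ and $a \mapsto \nabla_a$ is a $\kappa$-frame homomorphism $L \to \C L$ (recalled in \cref{section:background}), its unique frame extension $\h_\kappa L \to \C L$ sends $I$ to $\bigvee_{a\in I}\nabla_a = \nabla_I$, so preservation of meets and joins is automatic. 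Injectivity is then exactly your argument via \cref{lem:joins_with_generalised_closed}. Your hands-on approach has the virtue of being self-contained, while the universal-property route explains why the result is treated as routine.
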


The composition of $\nabla_\bullet$ with its right adjoint gives a map $\cl = \nabla_\bullet \circ (\nabla_\bullet)_*$,
which is of some interest. The map $\cl \colon \C L \to \C L$ assigns to a congruence the largest closed congruence lying below it.
More explicitly we have $\cl(C) = \nabla_{[0]_C}$ where the $\kappa$-ideal $[0]_C$ is the equivalence class of $0$ in $C$.
The map $\cl$ is an analogue of the topological closure operation and, remembering that the lattice of congruences is ordered in the reverse order
to the lattice of sublocales (the lattice of regular quotients \cite{PicadoPultr}), we expect it to be monotone, deflationary and idempotent and to
preserve finite meets. This is easily seen to be the case.
\begin{definition}
 If $C$ is a congruence, $\cl(C)$ is called the \emph{closure} of $C$.
\end{definition}

We recall the correspondence between quotients of $L/C$ and quotients of $L$ by congruences lying above $C$.
The canonical isomorphism $(L/C)/\nabla_{[a]} \cong L / (C \vee \nabla_a)$ implies that this correspondence respects (principal) closed congruences.
This results in the following lemma.
\begin{lemma}\label{lem:closure_in_quotient}
 Suppose $A \ge C \in \C L$. The closure of the image of $A$ in $\C(L/C)$ corresponds to $C \vee \cl(A)$ in $\C L$.
\end{lemma}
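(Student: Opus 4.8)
The plan is to do everything through the isomorphism underlying the quotient correspondence. Write $q\colon L\to L/C$ for the quotient map; then $B\mapsto(q\times q)^{-1}(B)$ is an order isomorphism (hence a frame isomorphism) from $\C(L/C)$ onto the up-set ${\uparrow}C$ in $\C L$, and ``the congruence of $L$ corresponding to $B$'' means $(q\times q)^{-1}(B)$. First I would record that this correspondence sends each principal closed congruence $\nabla_{[a]_C}$ of $L/C$ to $C\vee\nabla_a$: this is exactly the content of the canonical isomorphism $(L/C)/\nabla_{[a]}\cong L/(C\vee\nabla_a)$ recalled above. Since every element of $L/C$ is of the form $[a]_C$, every principal closed congruence of $L/C$ arises in this way; and since the correspondence is a complete-lattice isomorphism it preserves arbitrary joins. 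Combining this with the identity $C\vee\bigvee_i D_i=\bigvee_i(C\vee D_i)$ valid in the frame $\C L$, one concludes that the correspondence restricts to a bijection between the closed congruences of $L/C$ and the congruences of $L$ of the form $C\vee D$ with $D$ a closed congruence on $L$.

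With that in hand, let $A'$ denote the image of $A$ in $\C(L/C)$, so that $A$ corresponds to $A'$. Recall that $\cl$ assigns to a congruence the largest closed congruence lying below it, so $\cl(A')$ is the largest closed congruence of $L/C$ below $A'$. Transporting this along the order isomorphism and using the previous paragraph, the congruence of $L$ corresponding to $\cl(A')$ is the largest congruence of the form $C\vee D$, with $D$ closed on $L$, that lies below $A$.

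It then remains to identify this maximal congruence as $C\vee\cl(A)$. On the one hand $\cl(A)$ is closed and, $\cl$ being deflationary, $\cl(A)\le A$; together with $C\le A$ this gives $C\vee\cl(A)\le A$, so $C\vee\cl(A)$ is among the competitors. On the other hand, if $D$ is closed and $C\vee D\le A$, then $D\le A$, hence $D\le\cl(A)$ by maximality of $\cl(A)$ among closed congruences below $A$, and therefore $C\vee D\le C\vee\cl(A)$. Thus $C\vee\cl(A)$ is the maximum, which proves the claim.

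The step I expect to require the most care is the first paragraph: establishing that the frame isomorphism of the quotient correspondence carries the family of closed congruences of $L/C$ precisely onto $\{\,C\vee D\mid D\text{ closed on }L\,\}$. Individually each ingredient is routine — the principal case is handed to us, arbitrary joins are preserved because the correspondence is an isomorphism of complete lattices, and $C\vee\bigvee_i D_i=\bigvee_i(C\vee D_i)$ holds in any frame — but the reduction of a general closed congruence to its generating principal closed congruences, in both $L$ and $L/C$, has to be spelled out carefully so that the two ``closed'' families are matched up exactly. Once that is done, the rest is just an order-theoretic manipulation of the adjunction defining $\cl$.
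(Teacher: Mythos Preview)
Your proposal is correct and follows essentially the same route the paper indicates: the paper's proof consists only of the remark that the quotient correspondence $\C(L/C)\cong{\uparrow}C$ respects principal closed congruences via $(L/C)/\nabla_{[a]}\cong L/(C\vee\nabla_a)$, and you have simply spelled out the straightforward passage from there to arbitrary closed congruences and then to the closure. One minor point: the identity $C\vee\bigvee_i D_i=\bigvee_i(C\vee D_i)$ you invoke needs only that $\C L$ is a complete lattice, not that it is a frame.
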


\section{Clear congruences}\label{subsec:dense_congruences}

\begin{definition}
 Suppose $C,D \in \C L$ and $C \le D$. We say $D$ is \emph{dense in $C$} if $\cl(D) \le C$.
 We say $D$ is \emph{dense} if it is dense in $0$.
\end{definition}
This definition is quickly seen to be compatible with the notion of dense maps.
\begin{lemma}\label{lem:dense}
 A congruence $D \in \C L$ is dense in $C$ (where $C \le D$) if and only if the canonical map $h\colon L/C \to L/D$ is dense.
\end{lemma}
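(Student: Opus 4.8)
The plan is to show that both sides of the equivalence — "$D$ is dense in $C$" and "$h$ is a dense homomorphism" — reduce to the single set-theoretic inclusion $[0]_D \subseteq [0]_C$ between equivalence classes of $0$, and then conclude by transitivity.

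First I would unwind what it means for $h$ to be dense. Writing $q_C\colon L \to L/C$ and $q_D\colon L \to L/D$ for the quotient maps, the canonical map $h$ is characterised by $h \circ q_C = q_D$, so $h([x]_C) = [x]_D$ for every $x \in L$; this is well defined precisely because $C \le D$. Since every element of $L/C$ is of the form $[x]_C$, and the bottom elements of $L/C$ and $L/D$ are $[0]_C$ and $[0]_D$ respectively, $h$ is dense exactly when, for all $x \in L$, $[x]_D = [0]_D$ implies $[x]_C = [0]_C$; that is, $x \in [0]_D$ implies $x \in [0]_C$, which says precisely that $[0]_D \subseteq [0]_C$.

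Next I would translate the condition $\cl(D) \le C$. Recall that $\cl(D) = \nabla_{[0]_D} = \bigvee_{j \in [0]_D} \nabla_j$. A join of congruences lies below $C$ if and only if each member does, and $\nabla_j \le C$ if and only if the generating pair $(0,j)$ lies in $C$, i.e.\ if and only if $j \in [0]_C$; hence $\cl(D) \le C \iff [0]_D \subseteq [0]_C$. Combining this with the previous paragraph gives the lemma. (Alternatively, one can route through Lemma~\ref{lem:closure_in_quotient}: the image $\bar D$ of $D$ in $\C(L/C)$ is a congruence whose quotient recovers $L/D$, so $h$ is dense iff $\cl_{L/C}(\bar D)$ is the bottom congruence on $L/C$, and by that lemma this bottom corresponds to $C \vee \cl(D) = C$ in $\C L$, i.e.\ to $\cl(D) \le C$.)

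I do not expect a serious obstacle; the work is bookkeeping. The one point to be careful about is that "dense" here refers to the bottom elements $[0]_C$ and $[0]_D$ of the respective quotients, not to $0 \in L$, and that $h$ genuinely sends $[x]_C$ to $[x]_D$ (which is where $C \le D$ is used). One should also note that $[0]_D$ really is a $\kappa$-ideal, so that $\nabla_{[0]_D}$ is meaningful, but this was already recorded when $\cl$ was introduced.
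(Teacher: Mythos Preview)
Your proof is correct and follows essentially the same approach as the paper: both reduce density of $h$ to the implication $q_D(a)=0 \Rightarrow q_C(a)=0$ (equivalently $[0]_D \subseteq [0]_C$), and then identify this with $\cl(D)\le C$ via $\nabla_a \le A \iff (0,a)\in A$. Your write-up is slightly more expansive and includes the alternative route through \cref{lem:closure_in_quotient}, but the core argument is identical.
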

\begin{proof}
 Let $q_A\colon L \twoheadrightarrow L/A$ denote the quotient map associated with a congruence $A$.
 Now $h$ is dense if and only if $q_D(a) = 0 \implies q_C(a) = 0$.
 But $q_A(a) = 0 \iff \nabla_a \le A$ and so $h$ is dense
 if and only if $\nabla_a \le D \implies \nabla_a \le C$, which is to say that $\cl(D) \le C$.
\end{proof}

Madden shows in \cite{Madden} that a $\kappa$-frame $L$ has a largest dense congruence given by
\[\D_L = \{(a,b) \mid a \wedge x = 0 \iff b \wedge x = 0 \,\text{ for all $x \in L$}\}.\]
Now by applying \cref{lem:closure_in_quotient} we obtain the following.

\begin{lemma}\label{lem:clear_cong_characterisation}
 For every $\kappa$-ideal $I$ on $L$, there is a largest congruence $\partial_I$ dense in $\nabla_I$
 corresponding to $\D_{L/\nabla_I}$ in $\C (L/\nabla_I)$.
 More explicitly we find \[\partial_I = \{(a,b) \mid a \wedge x \in I \iff b \wedge x \in I \,\text{ for all $x \in L$}\}.\]
\end{lemma}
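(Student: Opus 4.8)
The plan is to obtain $\partial_I$ by transporting Madden's largest dense congruence along the correspondence, recalled just before \cref{lem:closure_in_quotient}, between congruences on $L/\nabla_I$ and congruences on $L$ lying above $\nabla_I$. Write $q\colon L\twoheadrightarrow L/\nabla_I$ for the quotient map, so this correspondence is the order isomorphism taking a congruence $E$ on $L/\nabla_I$ to the congruence $\{(a,b)\mid (q(a),q(b))\in E\}$ on $L$, with inverse given by restriction to the quotient. First I would check that this isomorphism matches ``$D$ is dense in $\nabla_I$'' with ``$E$ is dense'' (that is, dense in $0$): if $D\ge\nabla_I$ corresponds to $E$, then by \cref{lem:closure_in_quotient} the closure of $E$ corresponds to $\nabla_I\vee\cl(D)$, and since the least congruence above $\nabla_I$ is $\nabla_I$ itself, $E$ has trivial closure exactly when $\nabla_I\vee\cl(D)=\nabla_I$, i.e.\ exactly when $\cl(D)\le\nabla_I$, i.e.\ exactly when $D$ is dense in $\nabla_I$. (Alternatively this is immediate from \cref{lem:dense} on factoring the quotient map of $D$ through $q$.) As the correspondence is an order isomorphism, Madden's largest dense congruence $\D_{L/\nabla_I}$ therefore corresponds to the largest congruence dense in $\nabla_I$, which is the congruence $\partial_I$ we are after.

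It then remains to make the congruence $\partial_I=\{(a,b)\mid (q(a),q(b))\in\D_{L/\nabla_I}\}$ explicit. The key step is the identity $q(c)=0\iff c\in I$ for $c\in L$: here $q(c)=0$ means $(0,c)\in\nabla_I$, which by \cref{lem:joins_with_generalised_closed} means $0\vee i=c\vee i$ for some $i\in I$, that is $c\le i$ for some $i\in I$, and since $I$ is a downset this is just $c\in I$. Now $q$ is surjective and preserves finite meets, so every element of $L/\nabla_I$ has the form $q(x)$ and $q(a)\wedge q(x)=q(a\wedge x)$. Unfolding Madden's formula $\D_{L/\nabla_I}=\{(u,v)\mid u\wedge w=0\iff v\wedge w=0\text{ for all }w\in L/\nabla_I\}$, membership $(q(a),q(b))\in\D_{L/\nabla_I}$ thus says precisely that for every $x\in L$ we have $q(a\wedge x)=0\iff q(b\wedge x)=0$, which by the previous step reads $a\wedge x\in I\iff b\wedge x\in I$. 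This is the stated description of $\partial_I$.

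The proof is essentially bookkeeping, and the one point needing care is the first step: one must ensure the quotient correspondence carries the \emph{largest} dense congruence to the \emph{largest} congruence dense in $\nabla_I$, rather than merely to some congruence dense in $\nabla_I$. This is exactly what \cref{lem:closure_in_quotient} delivers, since it exhibits ``dense in $\nabla_I$'' as a condition on the image in $\C(L/\nabla_I)$ — namely ``trivial closure'' — transported across an order isomorphism. Granting that, the explicit formula falls out of the two short computations above.
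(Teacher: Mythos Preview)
Your proof is correct and follows exactly the approach the paper indicates: the paper itself simply says the lemma is obtained ``by applying \cref{lem:closure_in_quotient}'', and you have faithfully spelled out that application together with the routine unwinding of Madden's formula via \cref{lem:joins_with_generalised_closed}. There is nothing to add.
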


\begin{corollary}\label{cor:clear_cong_heyting_arrow}
 $\partial_I = \{(a,b) \mid \downarrow a \heyting I = \downarrow b \heyting I\}$ where `$\heyting\!$' denotes the
 Heyting arrow in $\h_\kappa I$.
\end{corollary}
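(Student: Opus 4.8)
The plan is to deduce the corollary from the explicit formula for $\partial_I$ in \cref{lem:clear_cong_characterisation} by evaluating the Heyting arrow directly. The first step is to record the formula for the relative pseudocomplement in the frame of $\kappa$-ideals: for $J,K \in \h_\kappa L$ one has $J \heyting K = \{x \in L \mid \downarrow x \cap J \subseteq K\}$. Of the three things to verify --- that this set is a $\kappa$-ideal, that it meets $J$ inside $K$, and that it contains every $\kappa$-ideal doing so --- only the first carries any content, and it is exactly where $\kappa$-frame distributivity is needed: given a $\kappa$-family $(x_\alpha)$ from this set and $y \le \bigvee_\alpha x_\alpha$ with $y \in J$, one writes $y = \bigvee_\alpha (y \wedge x_\alpha)$, observes that each $y \wedge x_\alpha$ lies in $\downarrow x_\alpha \cap J \subseteq K$, and concludes $y \in K$ because $K$ is closed under $\kappa$-joins.

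Next I would specialise to $J = \downarrow a$. Since $\downarrow x \cap \downarrow a = \downarrow(x \wedge a)$ and $I$ is a down-set, the condition $\downarrow(x\wedge a) \subseteq I$ just says $x \wedge a \in I$, so \[\downarrow a \heyting I = \{x \in L \mid x \wedge a \in I\}.\] (If one would rather read the arrow as taken after passing to the quotient by $\nabla_I$, the same value results, since $(\downarrow a \vee I)\heyting I = (\downarrow a \heyting I)\wedge(I \heyting I) = \downarrow a \heyting I$.)

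It then remains only to compare the two sides: $\downarrow a \heyting I = \downarrow b \heyting I$ holds precisely when $x \wedge a \in I \iff x \wedge b \in I$ for all $x \in L$, which by \cref{lem:clear_cong_characterisation} is exactly the membership condition for $(a,b) \in \partial_I$. I do not anticipate a real obstacle here; the proof is a one-line identification once the Heyting arrow has been computed, and the only step with any substance is the distributivity argument showing that the pseudocomplement formula defines a $\kappa$-ideal.
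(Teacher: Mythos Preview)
Your proposal is correct and matches the paper's intent: the corollary is stated without proof, immediately following \cref{lem:clear_cong_characterisation}, so the implicit argument is exactly the one you give---compute $\downarrow a \heyting I = \{x \in L \mid x \wedge a \in I\}$ in $\h_\kappa L$ and compare with the explicit formula for $\partial_I$. Your added verification that the Heyting-arrow formula really defines a $\kappa$-ideal (via distributivity) is more detail than the paper supplies, but entirely appropriate.
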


If $C$ is a congruence and $\nabla_I = \cl(C)$, then $\partial_I$ is the largest congruence dense in $C$ and we have $\nabla_I \le C \le \partial_I$.
We may compute this $\partial_I$ explicitly as \[\partial_I = \{(a,b) \mid a \wedge x \sim_C 0 \iff b \wedge x \sim_C 0 \,\text{ for all $x \in L$}\}.\]

The congruences of the form $\partial_I$ are an important class of congruences. In the case of frames, their associated nuclei (those of the form
$w_a(x) = (x \heyting a) \heyting a$) and sublocales (Boolean sublocales) have been studied in \cite{Macnab}, \cite{Isbell1991} and elsewhere,
but the congruence-oriented approach remains largely unexplored. There has consequently been little discussion in the setting of $\kappa$-frames where
nuclei and sublocales are unavailable. Some related ideas and results appear in \cite{Madden}, but the congruences $\partial_I$ narrowly escape mention.
Since we might imagine the smallest dense sublocale of a frame to be so rarefied as to appear translucent, we will call these \emph{clear} congruences.
\begin{definition}
 A \emph{clear} congruence is a congruence of the form $\partial_I$.
\end{definition}

The assignment $I \mapsto \partial_I$ is injective and reflects order, but the following example shows it is not monotone.
\begin{example}\label{ex:clear_congs_incomparable}
 The chain $\mathbbold{3} = \{0, a, 1\}$ is the frame of opens of the Sierpiński space.
 The clear congruences $\partial_0$ and $\partial_{\downarrow a}$ correspond to the open and closed points respectively,
 so $\partial_0$ and $\partial_{\downarrow a}$ are incomparable.
\end{example}

The clear congruences generate the congruence lattice under meets.
\begin{theorem}\label{thm:meet_of_clear}
 Every congruence is a meet of clear congruences.
\end{theorem}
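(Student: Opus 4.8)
The plan is to produce, for an arbitrary congruence $C$, an explicit family of clear congruences lying above $C$ whose meet is exactly $C$. The family I would use comes from absorbing $C$ into each principal closed congruence: for $d \in L$ set
\[ I_d \;=\; [0]_{\nabla_d \vee C} \;=\; \{a \in L \mid a \vee d \sim_C d\}, \]
using the formula for $\nabla_d \vee C$ recalled in \cref{section:background}; this $I_d$ is a $\kappa$-ideal on $L$, being the $0$-class of a congruence. The goal is then to prove that $C = \bigwedge_{d \in L} \partial_{I_d}$, which — as each $\partial_{I_d}$ is clear — gives the theorem.

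For the inclusion $C \le \bigwedge_{d\in L}\partial_{I_d}$ I would argue without computation: since $\cl(\nabla_d \vee C) = \nabla_{[0]_{\nabla_d\vee C}} = \nabla_{I_d}$, the discussion following \cref{cor:clear_cong_heyting_arrow}, applied to the congruence $\nabla_d \vee C$ with the $\kappa$-ideal $I_d$, shows that $\partial_{I_d}$ is the largest congruence dense in $\nabla_d \vee C$, so in particular $C \le \nabla_d \vee C \le \partial_{I_d}$. Meeting over all $d$ gives the inclusion. Alternatively one can check $C \le \partial_{I_d}$ directly from \cref{lem:clear_cong_characterisation}: if $(a,b) \in C$ then $((a\wedge x)\vee d,(b\wedge x)\vee d)\in C$ for every $x$, so $a\wedge x \in I_d \iff b \wedge x \in I_d$.

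For the reverse inclusion, suppose $(a,b)$ lies in $\partial_{I_d}$ for every $d$; I claim already $d = a$ and $d = b$ suffice. Plugging $x = 1$ into the description of $\partial_{I_a}$ from \cref{lem:clear_cong_characterisation}, the trivial membership $a \in I_a$ (since $a \vee a = a$) forces $b \in I_a$, i.e.\ $a \vee b \sim_C a$; symmetrically $\partial_{I_b}$ gives $a \vee b \sim_C b$. Transitivity of $\sim_C$ then yields $a \sim_C b$, so $(a,b) \in C$, and the two inclusions combine to $\bigwedge_{d \in L}\partial_{I_d} = C$.

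The one genuinely creative step is guessing the family $\{I_d\}$; once it is in hand both halves are short, so I do not expect a real obstacle. As a sanity check on the idea: specialising to $C = 0$ gives $0 = \bigwedge_{d\in L}\partial_{\downarrow d}$ (with $x=1$ as above, $a\in\downarrow d \iff b\in\downarrow d$ for all $d$ forces $a = b$), and the general case can alternatively be deduced from this special case through the isomorphism $\C(L/C)\cong{\uparrow}C$, under which the clear congruences of $L/C$ pull back along $L \twoheadrightarrow L/C$ to the congruences $\partial_{I_d}$.
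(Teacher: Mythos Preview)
Your proof is correct and takes essentially the same approach as the paper: both separate a pair $(a,b)\notin C$ using the clear congruence $\partial_{I_d}$ with $I_d=[0]_{\nabla_d\vee C}$. The only difference is cosmetic---the paper first reduces to the case $a<b$ and uses the single witness $d=a$, while you skip that reduction and use $d=a$ and $d=b$ together to obtain $a\sim_C a\vee b\sim_C b$.
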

\begin{proof}
 Take $C \in \C L$. Certainly, $C \le \bigwedge\{\partial_I \mid \partial_I \ge C,\, I \in \h_\kappa L\}$.
 Now suppose $(a,b) \notin C$. We may assume $a < b$ and we need a $\kappa$-ideal $I$ such that $\partial_I \ge C$ and $(a,b) \notin\partial_I$.
 
 Let $\nabla_I = \cl(\nabla_a \vee C)$. We have $\nabla_I \le \nabla_a \vee C \le \partial_I$ and thus $\partial_I \ge C$.
 Also, $\nabla_a \le \nabla_I$ and so $a \in I$. (Indeed, one may note that $I = \downarrow[a]_C$.)
 
 Suppose that $b \in I$ as well. Then $(a,b) \in \nabla_I \le \nabla_a \vee C$. So $(a,b) = (a\vee a, a\vee b) \in C$.
 This is a contradiction and so $b \notin I$. But then $(a,b) \notin \partial_I$ by \cref{lem:clear_cong_characterisation}.
\end{proof}

Quotients of a $\kappa$-frame by clear congruences have a special form.
\begin{definition}
 A $\kappa$-frame $L$ is called \emph{\textit{d}-reduced} \cite{Madden} if $\D_L = 0$ in $\C L$.
\end{definition}
\begin{lemma}\label{lem:quotient_by_clear}
 The quotient $L/C$ is \textit{d}-reduced if and only if the congruence $C$ is clear.
\end{lemma}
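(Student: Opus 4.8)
The statement is an equivalence, and the plan is to prove each direction by passing between $L/C$ and its further quotients, invoking \cref{lem:clear_cong_characterisation} and \cref{lem:dense}. For the ``if'' direction I would suppose $C = \partial_I$ for some $\kappa$-ideal $I$ and aim to show $\D_{L/\partial_I} = 0$. Let $D \ge \partial_I$ be the congruence on $L$ corresponding to $\D_{L/\partial_I} \in \C(L/\partial_I)$. Since $\D_{L/\partial_I}$ is dense, applying \cref{lem:dense} first inside $L/\partial_I$ (with base $0$) and then inside $L$ (with base $\partial_I$) shows that $D$ is dense in $\partial_I$, i.e.\ $\cl(D) \le \partial_I$. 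Applying $\cl$ and using its idempotence and monotonicity then gives $\cl(D) = \cl(\cl(D)) \le \cl(\partial_I) \le \nabla_I$, the last inequality because $\partial_I$ is dense in $\nabla_I$ by \cref{lem:clear_cong_characterisation}. Hence $D$ is dense in $\nabla_I$, and the maximality clause of \cref{lem:clear_cong_characterisation} forces $D \le \partial_I$; so $D = \partial_I$, $\D_{L/\partial_I} = 0$, and $L/\partial_I$ is \textit{d}-reduced.

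For the ``only if'' direction I would suppose $L/C$ is \textit{d}-reduced and set $\nabla_I := \cl(C)$. By the discussion following \cref{cor:clear_cong_heyting_arrow} we have $C \le \partial_I$ with $\partial_I$ dense in $C$, so by \cref{lem:dense} the canonical surjection $L/C \to L/\partial_I$ is dense. This surjection is the quotient of $L/C$ by the image $E$ of $\partial_I$ in $\C(L/C)$, so applying \cref{lem:dense} inside $L/C$ (with base $0$) shows $E$ is dense; as $L/C$ is \textit{d}-reduced, $E \le \D_{L/C} = 0$, hence $E = 0$ and $\partial_I = C$. Therefore $C$ is clear.

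I do not expect a single hard step: once \cref{lem:clear_cong_characterisation} and \cref{lem:dense} are in hand both directions are short. The delicate part is the bookkeeping around the correspondence between congruences on a quotient $L/A$ and congruences of $L$ above $A$ — in particular making sure that ``dense in $0$ in $\C(L/A)$'' matches ``dense in $A$ in $\C L$'', so that \cref{lem:dense} applies on both sides — together with the small point that $\cl(\partial_I) \le \nabla_I$ may be iterated using idempotence of $\cl$. It is worth flagging that, in contrast to the frame case, the ``if'' direction cannot be shortcut by citing Booleanness of the smallest dense quotient (which can fail for $\kappa$-frames, as recalled in the abstract); the closure-theoretic argument above is what takes its place.
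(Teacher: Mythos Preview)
Your argument is correct and follows essentially the same idea as the paper's proof: both translate the density condition on $L/C$ back to $\C L$ via the quotient correspondence. The paper packages both directions into a single chain of equivalences using \cref{lem:closure_in_quotient} (obtaining that $L/C$ is \textit{d}-reduced iff $C$ is the largest $D$ with $\cl(D)\vee C \le C$, i.e.\ $\cl(D)\le\cl(C)$), whereas you unfold the two implications separately via \cref{lem:dense}; the content is the same.
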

\begin{proof}
 By \cref{lem:closure_in_quotient}, $L/C$ is \textit{d}-reduced if and only if $C$ is the largest congruence $D$ for which $\cl(D) \vee C \le C$.
 But $\cl(D) \vee C \le C \iff \cl(D) \le C \iff \cl(D) \le \cl(C)$ and $C$ is the largest such $D$ if and only if $C$ is clear.
\end{proof}

\begin{lemma}\label{lem:clear_distinct_pseudocomplements}
 A $\kappa$-frame $L$ is \textit{d}-reduced if and only if no two distinct principal ideals on $L$ have the same pseudocomplement in $\h_\kappa L$.
\end{lemma}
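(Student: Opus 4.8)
The plan is to recognise Madden's largest dense congruence $\D_L$ as an equality relation between pseudocomplements of principal ideals, so that the condition $\D_L = 0$ translates word for word into the statement of the lemma. Write $J^*$ for the pseudocomplement of a $\kappa$-ideal $J$ in the frame $\h_\kappa L$, i.e.\ the largest $\kappa$-ideal whose intersection with $J$ is $\{0\}$ (equivalently $J^* = J \heyting \downarrow 0$).

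First I would compute $(\downarrow a)^*$ for $a \in L$. The natural candidate is $N_a = \{x \in L \mid x \wedge a = 0\}$. This is a downset containing $0$, and the $\kappa$-frame distributivity law shows it is closed under $\kappa$-joins, hence --- being in particular closed under binary joins --- a $\kappa$-ideal. Since meets in $\h_\kappa L$ are intersections and its bottom element is $\downarrow 0$, it is immediate that $N_a$ is the largest $\kappa$-ideal meeting $\downarrow a$ only in $0$, so $(\downarrow a)^* = N_a$; this is also a special case of \cref{cor:clear_cong_heyting_arrow}. Reading off the description of $\D_L$, one then sees that $(a,b) \in \D_L$ exactly when $N_a = N_b$, i.e.\ when $(\downarrow a)^* = (\downarrow b)^*$. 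Finally, $L$ is \textit{d}-reduced iff $\D_L = 0$ in $\C L$, i.e.\ iff $\D_L$ is the identity congruence, which by the previous sentence amounts to the implication $(\downarrow a)^* = (\downarrow b)^* \implies a = b$; since $\downarrow a = \downarrow b$ already forces $a = b$, this is precisely the assertion that no two distinct principal ideals of $L$ have the same pseudocomplement, which is what we want.

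There is no genuine obstacle here: the argument is essentially a repackaging of Madden's formula for $\D_L$. The one step deserving care is verifying that $N_a$ is genuinely a $\kappa$-ideal --- so that it equals the pseudocomplement rather than merely being a $\kappa$-ideal contained in it --- which is exactly where $\kappa$-frame distributivity is used; binary joins already yield $\kappa$-directedness, so the verification is uniform, including the special case $\kappa = \aleph_0$ of bounded distributive lattices.
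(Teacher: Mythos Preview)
Your proof is correct and is essentially the paper's own argument: the paper simply cites \cref{cor:clear_cong_heyting_arrow} with $I = \downarrow 0$ to obtain $\D_L = \partial_0 = \{(a,b) \mid (\downarrow a)^* = (\downarrow b)^*\}$, which is exactly the identification you establish directly via $N_a$. Your version is slightly more self-contained in verifying $(\downarrow a)^* = N_a$ by hand rather than reading it off the Heyting-arrow description, but the content is the same.
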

\begin{proof}
 Simply note that $\partial_0 = \{(a,b) \mid (\downarrow a)^* = (\downarrow b)^*\}$ by \cref{cor:clear_cong_heyting_arrow}.
\end{proof}
As a corollary we obtain a known result about frames.
\begin{corollary}\label{cor:clear_frame_Boolean}
 A frame is \textit{d}-reduced if and only if it is Boolean.
\end{corollary}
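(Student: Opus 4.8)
The plan is to deduce this from \cref{lem:clear_distinct_pseudocomplements} by carrying out the ``obvious changes'' that turn a $\kappa$-frame statement into a frame statement. The key observation is that for a frame $L$ the role of $\h_\kappa L$ is played by the free frame on $L$, which is $L$ itself (with $\downarrow a \leftrightarrow a$); concretely, the only ``$\kappa$-ideals'' in sight are the principal ones, and the pseudocomplement of $\downarrow a$ is $\downarrow(a^*)$, where $a^*$ is the pseudocomplement of $a$ in $L$. Thus \cref{lem:clear_distinct_pseudocomplements} reads: $L$ is \textit{d}-reduced if and only if the pseudocomplementation map $(-)^*\colon L\to L$ is injective.

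It then suffices to show that $(-)^*$ is injective precisely when $L$ is Boolean. If $L$ is Boolean then $a^{**}=a$ for every $a\in L$, so $(-)^*$ is a left inverse of itself and in particular injective. Conversely, suppose $(-)^*$ is injective. From the identity $a^{***}=a^*$, valid in any pseudocomplemented meet-semilattice, we get $(a^{**})^*=a^*$, so injectivity forces $a^{**}=a$ for all $a$. Hence every element of $L$ is regular, and a frame in which every element is regular is Boolean: for each $a$ one has $(a\vee a^*)^*=a^*\wedge a^{**}=a^*\wedge(a^*)^*=0$, so $a\vee a^* = (a\vee a^*)^{**}=0^*=1$, and $a^*$ is a complement of $a$.

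The computations involved are routine; the only step demanding genuine attention is the first one, namely making precise how \cref{lem:clear_distinct_pseudocomplements} specialises to frames --- in particular that passing from $\kappa$-frames to frames collapses $\h_\kappa L$ onto $L$ and makes every ideal principal, so that the criterion becomes a bare statement about the pseudocomplement operation on $L$.
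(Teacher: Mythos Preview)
Your proof is correct and follows essentially the same approach as the paper: both specialise \cref{lem:clear_distinct_pseudocomplements} to frames by identifying $\h_\kappa L$ with $L$, obtaining the criterion that $(-)^*$ is injective, and then show this is equivalent to $L$ being Boolean. The only cosmetic difference is that you first deduce $a^{**}=a$ from injectivity and then derive $a\vee a^*=1$, whereas the paper applies injectivity directly to $(a\vee a^*)^*=0=1^*$ to conclude $a\vee a^*=1$ in one step.
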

\begin{proof}
 Suppose $L$ is a \textit{d}-reduced frame. The frame $\h_\infty L$ of principal ideals on $L$ is isomorphic to $L$ itself and so every element of $L$
 has a unique pseudocomplement by \cref{lem:clear_distinct_pseudocomplements}. But for $a \in L$, we have $(a \vee a^*)^* = a^* \wedge a^{**} = 0$ and so
 $a \vee a^* = 1$. Thus, $a$ is complemented and $L$ is Boolean.
 
 Conversely, if $L$ is Boolean, then every element of $L$ is complemented and so pseudocomplements are clearly unique.
\end{proof}

We are now in a position to prove the following characterisation of \textit{d}-reduced $\kappa$-frames.
\begin{theorem}\label{thm:clear_kappa_frames}
Let $L$ be a $\kappa$-frame. The following are equivalent:
\begin{enumerate}
 \item $L$ is \textit{d}-reduced
 \item The map $g\colon L \hookrightarrow \h_\kappa L \twoheadrightarrow \h_\kappa L/\D_{\h_\kappa L}$ sending $a$ to $[\downarrow a]$ is injective
 \item $L$ is embeds as a sub-$\kappa$-frame of a Boolean frame $B$ and this sub-$\kappa$-frame generates $B$ under arbitrary joins.
\end{enumerate}
\end{theorem}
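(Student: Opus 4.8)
The plan is to prove the cycle of implications $(1)\Rightarrow(2)\Rightarrow(3)\Rightarrow(1)$, taking $B\coloneqq\h_\kappa L/\D_{\h_\kappa L}$ as the distinguished Boolean frame in $(3)$. Two preliminary facts would be recorded first. (a) $B$ is Boolean: the relation $\D_{\h_\kappa L}=\{(u,v)\mid u^*=v^*\}$ is closed under arbitrary joins, since the pseudocomplement of a join is the meet of the pseudocomplements, so $B$ is a frame; moreover $\D_{\h_\kappa L}=\partial_0$ is clear, so $B$ is \textit{d}-reduced by \cref{lem:quotient_by_clear} and hence Boolean by \cref{cor:clear_frame_Boolean}. (b) For any frame $F$ we have $\D_F=\{(u,v)\mid u^*=v^*\}$, because $u\wedge w=0\iff w\le u^*$; since the unit $\eta\colon L\to\h_\kappa L$, $a\mapsto\downarrow a$, of the free-frame adjunction is an injective $\kappa$-frame homomorphism, this shows that $g(a)=g(b)$ holds exactly when $(\downarrow a)^*=(\downarrow b)^*$ in $\h_\kappa L$.

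For $(1)\Rightarrow(2)$: by (b) together with \cref{lem:clear_distinct_pseudocomplements}, if $L$ is \textit{d}-reduced then $g(a)=g(b)$ forces $(\downarrow a)^*=(\downarrow b)^*$, hence $\downarrow a=\downarrow b$, hence $a=b$; so $g$ is injective. For $(2)\Rightarrow(3)$: the map $g$ is a composite of $\kappa$-frame homomorphisms and is injective by hypothesis, so $L$ embeds as a sub-$\kappa$-frame of the Boolean frame $B$; and because the principal ideals join-generate $\h_\kappa L$ and the quotient map $\h_\kappa L\twoheadrightarrow B$ is an onto frame homomorphism, $g(L)$ join-generates $B$.

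The real work is in $(3)\Rightarrow(1)$. Suppose $L$ is a join-generating sub-$\kappa$-frame of a Boolean frame $B$, and let $(a,b)\in\D_L$; I must show $a=b$. The key step is the claim that the pseudocomplement of $a\in L$, computed in $B$, is already determined inside $L$:
\[a^*=\bigvee\{x\in L\mid a\wedge x=0\}.\]
The inequality $\ge$ holds because every $y\in B$ with $a\wedge y=0$ is a join of elements of $L$, each of which lies below $y$ and so meets $a$ trivially; the inequality $\le$ is an instance of frame distributivity. Granting the claim, the hypothesis $(a,b)\in\D_L$ says precisely that $\{x\in L\mid a\wedge x=0\}=\{x\in L\mid b\wedge x=0\}$, whence $a^*=b^*$ in $B$ and, by Booleanness, $a=a^{**}=b^{**}=b$. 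Thus $\D_L=0$ and $L$ is \textit{d}-reduced.

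I expect this claim in $(3)\Rightarrow(1)$ — that the embedding of $L$ into $B$ does not change pseudocomplements of elements of $L$ — to be the one genuine obstacle; the rest is bookkeeping with the free-frame adjunction and the results of the preceding sections. The only point needing care is that the inclusion of $L$ into $B$ be a sub-$\kappa$-frame inclusion in the strict sense, so that finite meets and $0$ agree in $L$ and in $B$; this is what makes the displayed formula for $a^*$ unambiguous.
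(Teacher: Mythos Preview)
Your argument is correct and follows the same route as the paper's proof: $(1)\Rightarrow(2)$ via \cref{lem:clear_distinct_pseudocomplements}, $(2)\Rightarrow(3)$ with $B=\h_\kappa L/\D_{\h_\kappa L}$ and \cref{cor:clear_frame_Boolean}, and $(3)\Rightarrow(1)$ by expressing the complement of $a$ in $B$ as the join of the elements of $L$ disjoint from $a$ (the paper phrases this as $a\comp=\bigvee(\downarrow a)^*$ and then invokes \cref{lem:clear_distinct_pseudocomplements}, whereas you argue directly from the definition of $\D_L$). One cosmetic slip: the justifications you attach to the two inequalities in your displayed claim are interchanged --- the join-generation argument shows $a^*\le\bigvee\{x\in L\mid a\wedge x=0\}$, while frame distributivity (giving $a\wedge\bigvee\{x\mid a\wedge x=0\}=0$) yields the reverse inequality.
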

\begin{proof}
 ($1 \Rightarrow 2$) Suppose $L$ is \textit{d}-reduced. Take $x,y \in L$. By \cref{cor:clear_cong_heyting_arrow},
 $[\downarrow x] = [\downarrow y]$ in $\h_\kappa L/\D_{\h_\kappa L}$ if and only if $(\downarrow x)^* = (\downarrow y)^*$.
 But by \cref{lem:clear_distinct_pseudocomplements}, $(\downarrow x)^*$ and $(\downarrow y)^*$ are distinct if $x$ and $y$ are.
 Thus $g\colon a \mapsto [\downarrow a]$ is injective.
 
 ($2 \Rightarrow 3$) The frame $\h_\kappa L/\D_{\h_\kappa L}$ is Boolean by \cref{cor:clear_frame_Boolean} and is generated by the image
 of $L$ under the map $g$ since $\h_\kappa L$ is generated by the image of $L$ and the quotient map is surjective.
 
 ($3 \Rightarrow 1$) We now assume $L$ is a sub-$\kappa$-frame of a complete Boolean algebra $B$ and that $L$ generates $B$ under arbitrary joins.
 Let $a \in L$ and consider its complement $a\comp \in B$. Suppose $b \in L$ such that $b \le a\comp$ in $B$. Then $b \wedge a = 0$ and so
 $b \in (\downarrow a)^* \in \h_\kappa L$. Conversely, if $b \in (\downarrow a)^*$ then $b \wedge a = 0$ and so $b \le a\comp$.
 Thus $(\downarrow a)^* =\; \downarrow a\comp \cap L$. Since $L$ generates $B$, $a\comp = \bigvee (\downarrow a)^*$ in $B$.
 So for any $a, b \in L$, $(\downarrow a)^* = (\downarrow b)^*$ only if $a\comp = b\comp$ only if $a = b$. Thus, $L$ is \textit{d}-reduced.
\end{proof}

\begin{example}\label{ex:clear_kappa_frame}
 Let $M$ be the lattice of subsets of $\N$ that are either finite or equal to $\N$.
 Since $M$ is a generating sublattice of $2^\N$, it is a \textit{d}-reduced distributive lattice by \cref{thm:clear_kappa_frames}.
 Now consider the ideal $I$ of $M$ consisting of the sets that do not contain the natural number $2$.
 This is the pseudocomplement of the principal ideal generated by $\{2\}$.
 The quotient lattice $M / \nabla_I$ consists of three elements, namely $[\emptyset]$, $[\{2\}]$ and $[\N]$, and is
 isomorphic to the 3-element frame $\mathbbold{3}$. Thus, quotients of \textit{d}-reduced $\kappa$-frames may fail to be \textit{d}-reduced.
 It would be interesting to know exactly which $\kappa$-frames can appear as quotients of \textit{d}-reduced ones, but we do not attempt to answer this here.
\end{example}

\begin{lemma}\label{lem:hereditary_clear_is_Boolean}
 Every quotient of a $\kappa$-frame $L$ is \textit{d}-reduced if and only if $L$ is Boolean.
\end{lemma}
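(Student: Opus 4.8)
The plan is to handle the two implications separately, in each case by routing through the theory of clear congruences.

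For the ``if'' direction, I would first note that a quotient of a Boolean $\kappa$-frame is Boolean: if $q\colon L \twoheadrightarrow L/C$ is the quotient map and $a\comp$ is the complement of $a$ in $L$, then $q(a\comp)$ is a complement of $q(a)$ since $q$ preserves $0$, $1$, $\wedge$ and $\vee$. It then remains to observe that any Boolean $\kappa$-frame is \textit{d}-reduced, which follows from \cref{lem:clear_distinct_pseudocomplements}: in a Boolean $\kappa$-frame one has $(\downarrow b)^* = \{x \mid x \wedge b = 0\} = \downarrow b\comp$, so distinct principal ideals have distinct pseudocomplements in $\h_\kappa L$.

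For the ``only if'' direction, assume every quotient of $L$ is \textit{d}-reduced and fix $a \in L$; the goal is to produce a complement for $a$. Set $I = (\downarrow a)^* = \{x \in L \mid x \wedge a = 0\}$, which is a $\kappa$-ideal on $L$, and consider the quotient $L/\nabla_I$. Since it is \textit{d}-reduced, the correspondence recorded in \cref{lem:clear_cong_characterisation} (under which $\partial_I$ matches $\D_{L/\nabla_I}$ and $\nabla_I$ matches $0$) forces $\partial_I = \nabla_I$ in $\C L$. The key computation is that $(a,1) \in \partial_I$: by the explicit description of $\partial_I$ this says $a \wedge x \in I \Leftrightarrow x \in I$ for all $x \in L$, where ``$\Leftarrow$'' holds because $I$ is a downset, while ``$\Rightarrow$'' holds because $a \wedge x \in I$ forces $a \wedge x = (a \wedge x) \wedge a = 0$, hence $x \in I$. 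Thus $(a,1) \in \nabla_I$, so by \cref{lem:joins_with_generalised_closed} there is $i \in I$ with $a \vee i = 1 \vee i = 1$; since $i \in I = (\downarrow a)^*$ also gives $a \wedge i = 0$, this $i$ is a complement of $a$. As $a$ was arbitrary, $L$ is Boolean.

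I do not anticipate a serious obstacle. The only point requiring a little care is the bookkeeping in the ``only if'' direction: translating ``$L/\nabla_I$ is \textit{d}-reduced'' into ``$\partial_I = \nabla_I$'' through the correspondence between $\C(L/\nabla_I)$ and the congruences of $L$ above $\nabla_I$, and reading off $(a,1) \in \partial_I$ from \cref{lem:clear_cong_characterisation}. Alternatively, one reaches $\partial_I = \nabla_I$ by observing that $\nabla_I$ is clear (\cref{lem:quotient_by_clear}), that $\cl(\partial_I) = \nabla_I$, and that $\nabla_\bullet$ is injective (\cref{lem:nabla_tilde_injective}).
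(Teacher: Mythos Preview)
Your proof is correct and follows essentially the same line as the paper's. The only cosmetic difference is that the paper introduces $I$ indirectly via $\nabla_I = \cl(\Delta_a)$ (which is your $(\downarrow a)^*$, since $[0]_{\Delta_a} = \{x \mid x \wedge a = 0\}$) and then obtains $(a,1)\in\nabla_I$ from the sandwich $\nabla_I \le \Delta_a \le \partial_I = \nabla_I$, whereas you verify $(a,1)\in\partial_I$ by the direct computation with \cref{lem:clear_cong_characterisation}; the underlying argument is the same.
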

\begin{proof}
 $(\Leftarrow)$ A quotient of a Boolean $\kappa$-frame is Boolean and Boolean $\kappa$-frames are \textit{d}-reduced.
 
 $(\Rightarrow)$
 Let $a \in L$. We will show that $a$ has a complement. Consider $\Delta_a \in \C L$ and
 let $\nabla_I = \cl(\Delta_a)$. But $\nabla_I$ is clear, so $\nabla_I = \partial_I$
 and thus $\nabla_I = \Delta_a$.
 
 Now $\nabla_a \vee \nabla_I = 1$ and so $(0,1) \in \nabla_a \vee \nabla_I$. So $(a,1) \in \nabla_I$
 and $a \vee i = 1$ for some $i \in I$. But then $\nabla_a \vee \nabla_i = 1$ and also $\nabla_a \wedge \nabla_i = 0$ since
 $\nabla_i \le \nabla_I$. Thus $i$ is the complement of $a$ in $L$ and $L$ is Boolean.
\end{proof}

While arbitrary closed quotients of \textit{d}-reduced $\kappa$-frames typically fail to be \textit{d}-reduced,
quotients by principal closed congruences always are.
\begin{lemma}\label{lem:principal_closed_in_clear_is_clear}
 If $L$ is \textit{d}-reduced, then $\nabla_a$ is clear for all $a \in L$.
\end{lemma}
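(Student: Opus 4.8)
The plan is to reduce the statement, via \cref{lem:quotient_by_clear}, to showing that the quotient $L/\nabla_a$ is \textit{d}-reduced. Since $\nabla_a = \{(x,y) \mid x \vee a = y \vee a\}$, the quotient map $L \to L/\nabla_a$ is, up to isomorphism, the map $x \mapsto x \vee a$ onto the closed quotient $\uparrow\! a = \{x \in L \mid x \ge a\}$, whose finite meets and $\kappa$-joins are inherited from $L$ (using distributivity for the meets), whose top is $1$ and whose bottom is $a$. So it suffices to show that $\uparrow\! a$ is \textit{d}-reduced whenever $L$ is.

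For this I would appeal to the equivalence $(1)\Leftrightarrow(3)$ of \cref{thm:clear_kappa_frames} and assume that $L$ is a sub-$\kappa$-frame of a Boolean frame $B$ which generates $B$ under arbitrary joins. The first step is to check that $U := \{b \in B \mid b \ge a\}$ is again a Boolean frame: it is closed in $B$ under arbitrary meets and joins, and these are computed as in $B$, so $U$ is a complete lattice inheriting the frame distributivity law; moreover the complement of $b \in U$ is $b\comp \vee a$, as the computations $b \wedge (b\comp \vee a) = a$ and $b \vee b\comp \vee a = 1$ show, using $a \le b$. The second step is to verify that $\uparrow\! a$ sits inside $U$ as a generating sub-$\kappa$-frame. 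That the inclusion $\uparrow\! a \hookrightarrow U$ preserves finite meets, $\kappa$-joins, $0$ and $1$ is immediate, since all of these agree with the corresponding operations of $L \hookrightarrow B$. For the generation claim, take $b \in U$; because $L$ generates $B$ we have $b = \bigvee\{\ell \in L \mid \ell \le b\}$, and then $b = b \vee a = \bigvee\{\ell \vee a \mid \ell \in L,\ \ell \le b\}$ exhibits $b$ as a join of elements of $\uparrow\! a$ (each $\ell \vee a$ again lies in $L$, since $L$ is closed under finite joins, and is $\ge a$). Applying $(3)\Rightarrow(1)$ of \cref{thm:clear_kappa_frames} to $\uparrow\! a \hookrightarrow U$ then shows that $\uparrow\! a \cong L/\nabla_a$ is \textit{d}-reduced, and hence that $\nabla_a$ is clear.

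The only points I expect to need genuine care are the identification of $L/\nabla_a$ with the principal up-set $\uparrow\! a$ (standard, but worth spelling out) and the verification that $U$ is Boolean; the generation step is powered entirely by the trivial observation $b = b \vee a$ for $b \in U$. A more self-contained argument working directly with $\partial_0$ and pseudocomplements in $\h_\kappa(\uparrow\! a)$ looks feasible but distinctly more laborious, so I would not pursue it.
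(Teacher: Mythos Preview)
Your proof is correct and follows essentially the same route as the paper: both use \cref{thm:clear_kappa_frames} to embed $L$ in a Boolean frame $B$, pass to $\uparrow a$ on each side, and then apply the reverse implication (together with \cref{lem:quotient_by_clear}). The paper phrases the target frame as $B/\nabla_a$ rather than your $U=\{b\in B\mid b\ge a\}$, but these are the same object, and you have simply spelled out the verifications that the paper leaves implicit.
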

\begin{proof}
 Embed $L$ as a generating sub-$\kappa$-frame of a Boolean frame $B$. Since $L/\nabla_a \cong \uparrow a \subseteq L$, this embedding induces an
 embedding of $L/\nabla_a$ as a generating sub-$\kappa$-frame of $B/\nabla_a$ and hence $L/\nabla_a$ is \textit{d}-reduced.
\end{proof}

\begin{lemma}\label{lem:join_of_clear_with_principal_closed}
 In a $\kappa$-frame $L$, $\D_L \vee \nabla_a = \partial_{(\downarrow a)^{**}}$.
\end{lemma}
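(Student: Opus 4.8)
The plan is to identify both congruences by unravelling their explicit descriptions. Throughout, write $J = (\downarrow a)^{**}$, the double pseudocomplement being taken in $\h_\kappa L$. First I would record the elementary facts that $(\downarrow a)^* = \{t \in L : t \wedge a = 0\}$ (this set is a $\kappa$-ideal and is plainly the largest one meeting $\downarrow a$ trivially), and hence that $z \in J$ if and only if $z \wedge t = 0$ for every $t \in L$ with $t \wedge a = 0$. In particular, for $x,t \in L$ we have $x \wedge t \in J$ if and only if $x \wedge t \wedge s = 0$ for all $s$ with $s \wedge a = 0$.

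Next I would compute $\D_L \vee \nabla_a$. Using the formula $\nabla_a \vee C = \{(x,y) \mid (x \vee a, y \vee a) \in C\}$ together with Madden's description of $\D_L$ and the distributive identity $(x \vee a) \wedge t = (x \wedge t) \vee (a \wedge t)$ (which is $0$ precisely when both joinands are), one sees that $(x,y) \in \D_L \vee \nabla_a$ if and only if $x \wedge t = 0 \iff y \wedge t = 0$ for every $t \in L$ with $t \wedge a = 0$; the test elements $t$ with $a \wedge t \neq 0$ may be discarded since then both sides of the biconditional fail. On the other hand, \cref{lem:clear_cong_characterisation} gives $(x,y) \in \partial_J$ if and only if $x \wedge t \in J \iff y \wedge t \in J$ for all $t \in L$, which by the previous paragraph is again a condition about meets with elements disjoint from $a$.

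Then I would check that these two conditions coincide. For the inclusion $\D_L \vee \nabla_a \subseteq \partial_J$: suppose the biconditional holds for all $t$ disjoint from $a$, and suppose $x \wedge t \in J$; for each $s$ with $s \wedge a = 0$ the element $t \wedge s$ also satisfies $(t \wedge s) \wedge a = 0$ and $x \wedge (t \wedge s) = 0$, so $y \wedge t \wedge s = 0$; thus $y \wedge t \in J$, and the argument is symmetric. For the reverse inclusion $\partial_J \subseteq \D_L \vee \nabla_a$: suppose $(x,y) \in \partial_J$ and let $s$ be disjoint from $a$ with $x \wedge s = 0$; then $x \wedge s = 0 \in J$, so $y \wedge s \in J$, and applying the membership criterion for $J$ to the admissible witness $s$ itself gives $(y \wedge s) \wedge s = y \wedge s = 0$; again the argument is symmetric. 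This yields the desired equality.

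The computation is routine; the only place demanding care is the quantifier bookkeeping in the two inclusions --- specifically the device of replacing a test element $t$ by $t \wedge s$, and, in the reverse direction, the observation that the disjoint element $s$ is itself an admissible witness for membership (or not) in $J$. If a more structural presentation is preferred, the same kind of calculation shows that $z \in (\downarrow a)^{**}$ exactly when $(\downarrow z)^* = (\downarrow(z \wedge a))^*$, i.e.\ when $(z, z \wedge a) \in \D_L$ (using \cref{lem:clear_distinct_pseudocomplements}); from this one reads off that $[0]_{\D_L \vee \nabla_a} = (\downarrow a)^{**}$, hence $\cl(\D_L \vee \nabla_a) = \nabla_{(\downarrow a)^{**}}$, and then the remark following \cref{lem:clear_cong_characterisation} supplies $\nabla_{(\downarrow a)^{**}} \le \D_L \vee \nabla_a \le \partial_{(\downarrow a)^{**}}$ immediately, leaving only the inclusion $\partial_{(\downarrow a)^{**}} \le \D_L \vee \nabla_a$ to be established by the hand computation above.
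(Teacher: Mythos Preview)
Your argument is correct. The explicit descriptions you derive for $\D_L \vee \nabla_a$ and for $\partial_{(\downarrow a)^{**}}$ are accurate, and the two inclusions are verified cleanly; the trick of replacing a test element $t$ by $t \wedge s$, and in the reverse direction of testing $y \wedge s \in J$ against the witness $s$ itself, is exactly what is needed.

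The paper, however, proceeds quite differently and avoids all element-level computation. It first observes that $\cl(\D_L \vee \nabla_a) = \nabla_{\downarrow[a]_{\D_L}} = \nabla_{(\downarrow a)^{**}}$ (your ``structural'' remark recovers this), and then, instead of checking the inclusion $\partial_{(\downarrow a)^{**}} \subseteq \D_L \vee \nabla_a$ by hand, it simply notes that $\D_L \vee \nabla_a$ is itself a clear congruence: indeed $L/\D_L$ is \textit{d}-reduced, so by \cref{lem:principal_closed_in_clear_is_clear} the principal closed congruence $\nabla_{[a]}$ on $L/\D_L$ is clear, and via \cref{lem:quotient_by_clear} this translates to $\D_L \vee \nabla_a$ being clear in $\C L$. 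A clear congruence with closure $\nabla_{(\downarrow a)^{**}}$ is automatically $\partial_{(\downarrow a)^{**}}$. What the paper's route buys is brevity and a clean conceptual explanation via the embedding of \textit{d}-reduced $\kappa$-frames into Boolean frames (\cref{thm:clear_kappa_frames}); what your route buys is complete self-containment, needing neither \cref{thm:clear_kappa_frames} nor \cref{lem:principal_closed_in_clear_is_clear}.
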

\begin{proof}
 We find that $\cl(\D_L \vee \nabla_a) = \nabla_{\downarrow [a]_{\D_L}} = \nabla_{(\downarrow a)^{**}}$.
 But $\D_L \vee \nabla_a$ is clear by \cref{lem:principal_closed_in_clear_is_clear} and so the result follows.
\end{proof}

We noted in \cref{ex:clear_congs_incomparable} that the map $I \mapsto \partial_I$ is not monotone. Nonetheless
\cref{lem:join_of_clear_with_principal_closed} allows us to say something in a special case.
\begin{corollary}
 The map $a \mapsto \partial_{(\downarrow a)^{**}}$ is a $\kappa$-frame homomorphism from $L$ to $\uparrow \D_L \subseteq \C L$ with kernel $\D_L$.
 In particular, it is monotone and clear congruences of the form $\partial_{(\downarrow a)^{**}}$ are closed under finite meets and nonempty
 $\kappa$-joins.
\end{corollary}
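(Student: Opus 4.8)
The plan is to recognise the map $f\colon a\mapsto\partial_{(\downarrow a)^{**}}$ as a composite of homomorphisms we already understand. By \cref{lem:join_of_clear_with_principal_closed} we have $f(a)=\D_L\vee\nabla_a$, so $f$ is the composite of $\nabla_\bullet\colon L\to\C L$ with the map $C\mapsto\D_L\vee C$ from $\C L$ to $\uparrow\D_L$. The first of these is a $\kappa$-frame homomorphism by the remarks in \cref{section:background}. For the second, I would record the elementary fact that in any frame $F$ and any $e\in F$ the map $x\mapsto e\vee x$ is a frame homomorphism onto the principal up-set $\uparrow e$: it preserves arbitrary joins essentially by the description of joins in $\uparrow e$ (the empty join of $\uparrow e$ being $e$), it sends $1$ to $1$, and it satisfies $e\vee(x\wedge y)=(e\vee x)\wedge(e\vee y)$ by frame distributivity. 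Taking $F=\C L$ and $e=\D_L$ then shows $f$ is a $\kappa$-frame homomorphism $L\to\uparrow\D_L$.

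It remains to compute the kernel. Since $I\mapsto\partial_I$ is injective, $f(a)=f(b)$ is equivalent to $(\downarrow a)^{**}=(\downarrow b)^{**}$, which, on applying the pseudocomplement once more and using $J^{***}=J^{*}$, is in turn equivalent to $(\downarrow a)^{*}=(\downarrow b)^{*}$. Since $(\downarrow a)^{*}=\{x\in L\mid x\wedge a=0\}$, this last condition says exactly that $x\wedge a=0\iff x\wedge b=0$ for every $x\in L$, i.e.\ that $(a,b)\in\D_L$. Hence the kernel of $f$ is $\D_L$. (Alternatively, one can argue directly: from $\nabla_a\le\D_L\vee\nabla_b$ and $(0,a)\in\nabla_a$ one gets $(b,a\vee b)\in\D_L$, and symmetrically $(a,a\vee b)\in\D_L$, whence $(a,b)\in\D_L$; the reverse inclusion is a short congruence manipulation.)

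The remaining assertions are then formal. Monotonicity holds for any $\kappa$-frame homomorphism since it preserves binary meets. The image of $f$ is precisely the collection of congruences of the form $\partial_{(\downarrow a)^{**}}$, and being the image of a $\kappa$-frame homomorphism it is a sub-$\kappa$-frame of $\uparrow\D_L$, hence closed under the finite meets and $\kappa$-joins computed there. The one point I would treat with care is the comparison of these operations with those of $\C L$: finite meets in $\uparrow\D_L$ (including the empty meet, namely the top congruence $\nabla_1=\partial_{(\downarrow 1)^{**}}$) agree with those in $\C L$, and a \emph{nonempty} $\kappa$-join of congruences lying above $\D_L$ again agrees with the join taken in $\C L$; only the empty join differs, being $\D_L$ in $\uparrow\D_L$ but $0$ in $\C L$. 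This is exactly why the closure statement is --- and must be --- restricted to nonempty $\kappa$-joins; everything else is a formal consequence of viewing $f$ as the advertised composite, so I expect no real obstacle beyond this bookkeeping.
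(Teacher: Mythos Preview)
Your proof is correct and is precisely the argument the paper intends: the corollary is stated without proof immediately after \cref{lem:join_of_clear_with_principal_closed}, and the implied reasoning is exactly your decomposition of $a\mapsto\partial_{(\downarrow a)^{**}}$ as $\nabla_\bullet$ followed by $\D_L\vee(-)$. One tiny terminological nit: the identity $e\vee(x\wedge y)=(e\vee x)\wedge(e\vee y)$ is ordinary lattice distributivity (which frames satisfy, being distributive lattices) rather than the frame distributivity law proper, but this does not affect the argument.
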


\section*{Acknowledgements}

I would like to thank John Frith and Christopher Gilmour for discussions about the content and presentation of this paper.

\bibliographystyle{spmpsci}

\end{document}